\documentclass[a4paper,12pt,english]{article}
\usepackage[utf8x]{inputenc}
\usepackage{amsmath, amsthm, amssymb, graphicx, makeidx, mathrsfs, enumerate}
\usepackage{setspace}
\usepackage{hyperref}
\usepackage{multirow}
\usepackage[T1]{fontenc}
\usepackage{babel}
\usepackage{authblk}

\newcommand{\Z}{\mbox{$\mathbb Z$}}	
\newcommand{\Q}{\mbox{$\mathbb Q$}}	
\newcommand{\N}{\mbox{$\mathbb N$}}     
\newcommand{\R}{\mbox{$\mathbb R$}}     
\newcommand{\C}{\mbox{$\mathbb C$}}     


\usepackage{vmargin}
\setmarginsrb     { 1.25in}  
                        { 1.0in}  
                        { 1.0in}  
                        { 1.0in}  
                        {  20pt}  
                        {0.25in}  
                        {   9pt}  
                        { 0.3in}  
\raggedbottom

\newtheorem{theorem}{Theorem}[section]

\newtheorem{lemma}[theorem]{Lemma}

\onehalfspacing

\makeindex
\begin{document}
\pagenumbering{arabic}
\author[]{Anuj Jakhar\footnote{All correspondence may be addressed to this author.} }

\author[]{Neeraj Sangwan \\}
\author[]{anujjakhar@iisermohali.ac.in,  neerajsan@iisermohali.ac.in}
\affil[]{ Indian Institute of Science Education and Research (IISER), Mohali
 Sector-81, S. A. S. Nagar-140306, Punjab, India.}

\date{}
\renewcommand\Authands{}

\title{ An irreducibility criterion for integer polynomials\footnote{\noindent The financial support from IISER Mohali is gratefully acknowledged by the authors. }}

\maketitle

\noindent{\textbf {Abstract.} Let $f(x) = \sum\limits _{i=0}^{n} a_i x^i $ be a polynomial with coefficients from the ring $\Z$ of integers satisfying either $(i)$  $0 < a_0 \leq a_{1} \leq \cdots \leq a_{k-1} < a_{k} < a_{k+1} \leq \cdots  \leq a_n$ for some $k$, $0 \leq k \leq n-1$; or
$(ii)$ $|a_n| > |a_{n-1}| + \cdots + |a_{0}|$ with $a_0 \neq 0$. In this paper, it is proved that if $|a_n|$ or $|f(m)|$ is a prime number for some integer $m$ with $|m|\geq 2 $ then the polynomial $f(x)$ is irreducible over $\Z$.

\bigskip

\noindent \textbf{Keywords :} Irreducible polynomials, Integer polynomials.

\bigskip

\noindent \textbf{2010 Mathematics Subject Classification }: 12E05.
\newpage

\section{Introduction}
One of the major themes in the development of number theory is the relationship between prime numbers and irreducible polynomials.
  There are better methods to determine prime numbers than to determine irreducible polynomials over the ring $\Z$ of 
  integers. In 1874, Bouniakowsky \cite{BC} made a conjecture that if $f(x) \in \Z[x]$ is an irreducible polynomial for 
  which the set of values $f(\Z^{+})$ has greatest common divisor $1$, then $f(x)$ represents prime numbers infinitely 
  often. Bouniakowsky conjecture is true for polynomials of degree $1$ in view of the well known Dirichlet's Theorem for 
  primes in arithmetic progression's; however it is still open for higher degree polynomials. The converse of the 
  Bouniakowsky conjecture, viz. if $f(x)$ is an integer polynomial such that the set $f(\Z^+)$ has infinitely many prime 
  numbers, then $f(x)$ is irreducible over $\Z$, is true; because if $f(x)=g(x)h(x)$ where $g(x)$, $h(x) \in \Z[x]$, then at least one 
  of the polynomials $g(x)$, $h(x)$, say $g(x)$ takes the values $\pm 1$  infinitely many times which is not possible, 
  since the polynomials $g(x) + 1$ and $g(x) - 1$ can have at most finitely many roots over the field of complex numbers.  
  In this paper, we give a proof (using elementary methods) of a similar version of the converse for a special class of 
  polynomials with a weaker hypothesis. Indeed we prove the following theorem:
\begin{theorem} \label{1.1} Let $f(x) = a_n x^n +a_{n-1}x^{n-1} + \cdots+ a_1x + a_0 \in  \Z[x]$ be a polynomial which satisfies one of the following conditions: \\
$(i)$ $0 < a_0 \leq a_{1} \leq \cdots \leq a_{k-1} < a_{k} < a_{k+1} \leq \cdots  \leq a_n$ for some $k$, $0 \leq k \leq n-1$;\\
$(ii)$ $|a_n| > |a_{n-1}| + \cdots + |a_{0}|$ with $a_0 \neq 0$.\\
Suppose that $|a_n|$ is a prime number or $|f(m)|$ is a prime for some integer $m$ with $|m| \geq 2$.  Then $f(x)$ is irreducible in $\Z[x]$. Further if $|m|$ is the $r^{th}$ power of some integer, then $f(x^{r})$ is irreducible in $\Z[x]$. 
\end{theorem}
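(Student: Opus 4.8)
The plan is to reduce the entire statement to a single geometric fact about the complex zeros of $f$: under either hypothesis $(i)$ or $(ii)$, every root $\alpha\in\C$ of $f$ satisfies $|\alpha|<1$. Once this root bound is available, both primality hypotheses yield irreducibility through an A.~Cohn--type argument, and the assertion about $f(x^r)$ follows by applying the same reasoning to a transformed polynomial. So the real content lies in the root bound, and I would isolate it as a lemma before anything else.

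For condition $(ii)$ the bound is immediate: if $|\alpha|\ge 1$ and $f(\alpha)=0$, then $|a_n||\alpha|^n=|a_{n-1}\alpha^{n-1}+\cdots+a_0|\le(|a_{n-1}|+\cdots+|a_0|)|\alpha|^{n-1}<|a_n||\alpha|^{n-1}$, forcing $|\alpha|<1$, a contradiction. Condition $(i)$ is the delicate case, and I expect it to be the main obstacle. Here I would multiply by $(1-x)$ and use the telescoping identity $(1-x)f(x)=a_0+\sum_{i=1}^{n}(a_i-a_{i-1})x^i-a_nx^{n+1}$. For a root $\alpha$ with $|\alpha|\ge 1$ this gives $a_n\alpha^{n+1}=a_0+\sum_{i=1}^{n}(a_i-a_{i-1})\alpha^i$; since the differences $a_i-a_{i-1}$ are non-negative and telescope to $a_n-a_0$, bounding each power by $|\alpha|^n$ and applying the triangle inequality yields $|a_n||\alpha|^{n+1}\le a_n|\alpha|^n$, whence $|\alpha|\le 1$ and so $|\alpha|=1$. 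The crux is ruling out roots on the unit circle: for $|\alpha|=1$ the chain collapses to an equality, so the triangle inequality is sharp, which forces each nonzero term $a_0$ and $(a_i-a_{i-1})\alpha^i$ to point in a common direction; since $a_0>0$ that direction is the positive real axis. The two strict steps $a_{k-1}<a_k$ and $a_k<a_{k+1}$ then make both $\alpha^k$ and $\alpha^{k+1}$ positive reals, so $\alpha=\alpha^{k+1}/\alpha^k>0$ together with $|\alpha|=1$ gives $\alpha=1$; but $f(1)=\sum a_i>0$, a contradiction. This careful exploitation of the strict steps is the step I would write out most carefully.

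With the root bound in hand, I would argue irreducibility by contradiction. Suppose $f=gh$ with $g,h\in\Z[x]$ both non-constant, and write $g(x)=b\prod_j(x-\alpha_j)$ with $b\in\Z$, $|b|\ge 1$, the $\alpha_j$ being zeros of $f$, so $|\alpha_j|<1$. If $|f(m)|$ is prime for some $|m|\ge 2$, then $|g(m)|=|b|\prod_j|m-\alpha_j|>1$ since $|m-\alpha_j|\ge|m|-|\alpha_j|>1$, and likewise $|h(m)|>1$; then $|f(m)|=|g(m)||h(m)|$ is a product of two integers each exceeding $1$, contradicting primality. If instead $|a_n|$ is prime, then $a_n$ is the product of the leading coefficients of $g$ and $h$, so one of them, say $g$, has leading coefficient $\pm1$; hence $|g(0)|=\prod_j|\alpha_j|<1$, and being an integer it vanishes, forcing $a_0=f(0)=g(0)h(0)=0$, contradicting $a_0\ne 0$ in both hypotheses. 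Either way $f$ is irreducible.

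Finally, for the statement on $f(x^r)$, set $F(x)=f(x^r)$. Every zero $\beta$ of $F$ satisfies $\beta^r=\alpha$ for some zero $\alpha$ of $f$, so $|\beta|=|\alpha|^{1/r}<1$ and $F$ again has all its zeros in the open unit disk. Since $|m|$ is an $r$-th power, I can pick an integer $t$ with $|t|\ge 2$ and $t^r=m$, so that $|F(t)|=|f(m)|$ is prime; note also that the leading coefficient of $F$ is still $a_n$ and $F(0)=a_0\ne 0$. Repeating the Cohn--type argument of the previous paragraph verbatim for $F$ at the evaluation point $t$ then shows that $f(x^r)$ is irreducible in $\Z[x]$, completing the proof.
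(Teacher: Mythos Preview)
Your argument is correct and follows the paper's overall plan---show that every zero of $f$ lies in the open unit disk, then run a Cohn-type size estimate on any putative factorisation---but it departs from the paper in two places worth noting. For the boundary case $|\alpha|=1$ under hypothesis~$(i)$, the paper applies an external result (its Proposition~2.A, quoted from Harrington--Jones) to $(x-1)f(x)$ to force $\alpha^{2}=1$ and then checks $f(\pm1)\neq0$; you instead argue directly that equality in the triangle inequality makes every nonzero term $(a_i-a_{i-1})\alpha^{i}$ a positive real, so the two strict jumps yield $\alpha^{k},\alpha^{k+1}>0$ and hence $\alpha=1$. Your treatment is self-contained and slightly sharper. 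For the hypothesis that $|a_n|$ is prime, the paper passes to the reciprocal polynomial $x^{n}f(1/x)$, whose zeros lie outside the closed disk and whose constant term is $a_n$, and then invokes its Lemma~2.3 at $0$; your route---one factor must have leading coefficient $\pm1$, whence $|g(0)|=\prod_j|\alpha_j|<1$ forces $g(0)=0$ and thus $a_0=0$---reaches the same contradiction without introducing the reciprocal. One small slip in your final paragraph: you assert that an integer $t$ with $t^{r}=m$ exists whenever $|m|$ is an $r$-th power, but this fails when $m<0$ and $r$ is even; the paper's own proof is equally brief on this edge case with its ``$f(\pm x^{r})$'' remark.
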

It may be pointed out that a similar result is proved in \cite[Theorem 1]{RM} with a different hypothesis.
\section{Preliminary Results}
Let the set $ \left\lbrace z \in \C : |z|<1 \right\rbrace  $ be denoted by $\mathcal{C} $, where $\C$ denotes the complex numbers.
The following proposition proved in \cite[Proposition 2.3]{HJ} will be used in the sequel. We omit its proof.\\

\noindent\textbf{Proposition 2.A.}
Let $f(x) = \sum\limits_{l = 0}^{n}a_l x^l \in \Q[x]$ and suppose that $a_i \neq 0$ and $a_{j} \neq 0$ for some $0 \leq i < j \leq n$. Suppose further that \begin{equation}\label{eq:irr1}
\sum\limits_{0 \leq l \leq n; l \neq t}|a_{l}| \leq q^t|a_t| 
\end{equation}      
for some $0 \leq t \leq n$, with $t \neq i$ and $t \neq j$, and some $q \in \R$ with $0 < q \leq 1$. If $f(x)$ has a zero $\alpha \in \{z \in \C | q \leq |z| \leq 1\}$, then equality holds in $(\ref{eq:irr1})$ and $\alpha^{2(j-i)} = 1$.\\

\noindent Now we prove  two elementary lemmas which are of independent interest as well.

\begin{lemma}
Let $f(x) = a_n x^n +\cdots+a_0 \in  \Z[x]$ be a polynomial such that $0 < a_0 \leq a_{1} \leq \cdots \leq a_{k-1} < a_{k} < a_{k+1} \leq \cdots \leq a_{n-1} \leq a_n$ for some $k$, $0 \leq k \leq n-1$. Then $f(x)$ has all zeros in the set $\mathcal{C}$.
\end{lemma}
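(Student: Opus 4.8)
The plan is to exploit the telescoping that occurs after multiplying $f$ by $(1-x)$, which is the standard device behind the Enestr\"om--Kakeya theorem. Setting $g(x) = (1-x)f(x)$ and collecting powers gives
\[ g(x) = a_0 + \sum_{i=1}^{n}(a_i - a_{i-1})x^i - a_n x^{n+1}, \]
and the hypothesis $0 < a_0 \le a_1 \le \cdots \le a_n$ guarantees that every coefficient $a_i - a_{i-1}$ is non-negative, while $a_0 + \sum_{i=1}^n (a_i - a_{i-1}) = a_n$ telescopes. Since any zero of $f$ is a zero of $g$, it suffices to show $g(z) \ne 0$ whenever $|z| > 1$ and separately to rule out $z$ with $|z| = 1$.

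First I would treat $|z| > 1$. The reverse triangle inequality gives $|g(z)| \ge a_n|z|^{n+1} - \big(a_0 + \sum_{i=1}^n (a_i - a_{i-1})|z|^i\big)$. Because $|z| \ge 1$ and all the coefficients are non-negative, each $|z|^i \le |z|^n$, so the subtracted sum is at most $|z|^n\big(a_0 + \sum_{i=1}^n (a_i - a_{i-1})\big) = a_n |z|^n$; hence $|g(z)| \ge a_n |z|^n(|z|-1) > 0$. Thus $f$ has no zero with $|z| > 1$.

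The delicate part, and the one place where the strict inequalities in the hypothesis are essential, is the boundary $|z| = 1$, where the crude estimate only yields $|g(z)| \ge 0$. Here I would suppose $f(z) = 0$ with $|z| = 1$ and examine the equality case. From $g(z) = 0$ we obtain $a_n z^{n+1} = a_0 + \sum_{i=1}^n (a_i - a_{i-1}) z^i$; taking moduli and using $|z| = 1$ (so $|a_n z^{n+1}| = a_n$) forces equality in the triangle inequality $\big|a_0 + \sum_{i=1}^n (a_i - a_{i-1}) z^i\big| \le a_0 + \sum_{i=1}^n (a_i - a_{i-1})$. Since $a_0 > 0$ is a positive real, equality forces every nonzero term $(a_i - a_{i-1}) z^i$ to be a positive real, i.e.\ $z^i = 1$ for each $i$ with $a_i - a_{i-1} > 0$. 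The strict inequality $a_k < a_{k+1}$ (available since $k \le n-1$) gives $z^{k+1} = 1$, and when $k \ge 1$ the strict inequality $a_{k-1} < a_k$ gives $z^k = 1$, so that $z = z^{k+1}/z^k = 1$; when $k = 0$ the inequality $a_0 < a_1$ already yields $z = 1$ directly. But $f(1) = \sum_{i=0}^n a_i > 0$ contradicts $f(z) = 0$, so $f$ has no zero on $|z| = 1$ either. Combining the two cases, every zero lies in $\mathcal{C}$.

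I expect the equality-in-the-triangle-inequality analysis to be the main obstacle: one must carefully justify that equality forces a common argument (pinned to the positive real axis by the term $a_0 > 0$), and then do the bookkeeping with the two strict inequalities, including the separate treatment of $k = 0$, so as to pin $z$ down to exactly $1$. The $|z| > 1$ region, by contrast, should be routine.
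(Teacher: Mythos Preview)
Your proof is correct and follows the same Enestr\"om--Kakeya device as the paper: multiply by $(1-x)$ (the paper uses $(x-1)$, which is the same up to sign), telescope, and bound. The treatment of $|z|>1$ is essentially identical to the paper's.

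The difference is on the boundary $|z|=1$. The paper invokes an external result (its Proposition~2.A, quoted from Harrington--Jones) applied to $F(x)=(x-1)f(x)$ with $t=n+1$, $i=k$, $j=k+1$, $q=1$; that proposition yields $\alpha^{2(j-i)}=\alpha^2=1$, so the paper must still exclude both $\alpha=1$ and $\alpha=-1$. Your argument is self-contained: you analyse equality in the triangle inequality directly, use the positive real anchor $a_0>0$ to force $z^i=1$ for each index with $a_i>a_{i-1}$, and the two consecutive strict inequalities at $k$ and $k+1$ (or the single one at $k=0$) pin down $z=1$ immediately. This is slightly cleaner, since it avoids the external proposition and dispenses with the separate check that $f(-1)\neq 0$.
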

\begin{proof}
We first show that $f(x)$ has all zeros in $\{z \in \C : |z| \leq 1\}$. Suppose to the contrary that $f(x)$ has a zero $\alpha$ with $|\alpha|>1$. Then $\alpha$ is a root of $F(x) = (x-1)f(x) = a_nx^{n+1} + (a_{n-1} - a_n)x^{n} + \cdots + (a_{0} - a_{1})x - a_0.$ Therefore in view of the hypothesis and the assumption $|\alpha| > 1$, we have 
\begin{align*}
|a_n\alpha^{n+1}|&\leq a_0 + (a_1 - a_0)|\alpha| + \cdots + (a_n - a_{n-1})|\alpha|^{n}\\
& < a_0|\alpha|^n + (a_1 - a_0)|\alpha|^{n} + \cdots + (a_n - a_{n-1})|\alpha|^{n} = |a_n\alpha^n|
\end{align*}
which is a contradiction as $a_n > 0$.  Now we show that $|\alpha| < 1$. Assume that $|\alpha|=1$.
Observe that the coefficients of $x^{k}$ and $x^{k+1}$ in $F(x)$ are negative and other coefficients except $a_n$ are non-positive. Thus the hypothesis of Proposition 2.A is satisfied for $t = n+1, i = k, j = k+1$ and $q = 1$. By this proposition, we have $\alpha^2 = 1$, which is impossible as $f(1)$ and $f(-1)$ are easily seen to be non-zero using the hypothesis.
\end{proof}

\begin{lemma}
Let $f(x) \in \Z[x]$ be a polynomial having all its zeros in the set $\mathcal{C}$. If there exists an integer $m$ with $|m| \geq 2$ such that  $|f(m)|$ is a prime number, then $f(x)$ is irreducible in $\Z[x]$.
\end{lemma}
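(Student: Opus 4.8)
The plan is to argue by contradiction: a nontrivial factorization of $f$ would force one factor to have absolute value $1$ at $x=m$, and I will rule this out by showing that every \emph{nonconstant} integer polynomial whose zeros all lie in $\mathcal{C}$ is strictly larger than $1$ in absolute value at any integer $m$ with $|m|\ge 2$. So the whole argument rests on a single modulus estimate.

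First I would suppose $f$ is reducible and write $f(x)=g(x)h(x)$ with $g,h\in\Z[x]$, neither of them a unit. Evaluating at $m$ gives $|f(m)|=|g(m)|\,|h(m)|$, a factorization of the prime $|f(m)|$ into a product of two nonnegative integers. Since that product is nonzero, both factors are nonzero, and primality forces exactly one of them to equal $1$; say $|g(m)|=1$. Here I would note that $g$ cannot be a constant: a constant $g=b$ with $|g(m)|=|b|=1$ would give $b=\pm1$, a unit, contrary to the choice of factorization. Hence $\deg g\ge 1$.

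The key step is then the estimate on $|g(m)|$. Because the complex zeros of $g$ form a subset of the zeros of $f$, they all lie in $\mathcal{C}$, i.e.\ have modulus $<1$. Factoring $g$ over $\C$ as $g(x)=b\prod_{l=1}^{s}(x-\alpha_l)$, where $b$ is the (nonzero integer) leading coefficient, $s=\deg g\ge1$, and $|\alpha_l|<1$, I would combine $|b|\ge 1$ with $|m-\alpha_l|\ge |m|-|\alpha_l|>2-1=1$ to obtain
\[
|g(m)|=|b|\prod_{l=1}^{s}|m-\alpha_l|>1.
\]
This contradicts $|g(m)|=1$, so no factorization into non-units exists and $f$ is irreducible in $\Z[x]$.

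The proof is short, and the only delicate points — which I would regard as the main (though mild) obstacle — are bookkeeping: eliminating the case where the unit-valued factor $g$ is constant (done via the unit observation above), and securing the \emph{strict} inequality. The strictness is exactly what uses both hypotheses $|m|\ge 2$ and $|\alpha_l|<1$; it is the open disk $\mathcal{C}$, rather than the closed unit disk, that guarantees $|m|-|\alpha_l|>1$, and this is precisely the feature that the preceding lemma was arranged to supply by placing all zeros of $f$ in $\mathcal{C}$.
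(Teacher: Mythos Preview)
Your proof is correct and is essentially identical to the paper's: both argue by contradiction, assume a nontrivial factorization $f=gh$, use primality of $|f(m)|$ to force $|g(m)|=1$, factor $g(x)=c\prod(x-\alpha_i)$ over $\C$, and derive $|g(m)|>1$ from $|m-\alpha_i|\ge |m|-|\alpha_i|>|m|-1\ge 1$. Your write-up is in fact slightly more careful than the paper's in explicitly ruling out the case where $g$ is a nonunit constant before invoking the product estimate.
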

\begin{proof}
 Suppose to the contrary that $f(x) = g(x)h(x)$, where $g(x),h(x)\in \Z[x]$. In view of the hypothesis, at least one of $|g(m)|, |h(m)|$ equals to 1. Say $|g(m)| = 1$. Write $g(x) = c\prod\limits_{i=1}^{k}(x - \alpha_i)$, where $\alpha_{i} \in \C,~1\leq i \leq k$. Keeping in mind that $|\alpha_i| < 1$ and $|m| \geq 2$, we have
\begin{align*}
|g(m)| &= |c\prod\limits_{i=1}^{k}(m - \alpha_i)| \geq |c|\prod\limits_{i=1}^{k}(|m| - |\alpha_i|) > |c|\prod\limits_{i=1}^{k}(|m| - 1) \geq 1,
\end{align*} 
which gives $|g(m)| > 1$, leading to a contradiction. Hence the lemma is proved.
\end{proof}
Arguing as in the above lemma, the following lemma can be easily proved.
\begin{lemma}
Let $f(x) \in \Z[x]$ be a polynomial having all its zeros in the set $\{z \in \C : |z| > 1\}$. If $|f(0)|$ is  a prime number, then $f(x)$ is irreducible in $\Z[x]$.
\end{lemma}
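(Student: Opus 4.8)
The plan is to mirror the proof of the preceding lemma, but to evaluate at the point $0$ in place of $m$ and to exploit that now every zero of $f$ lies \emph{outside} the closed unit disk. First I would assume, for contradiction, that $f(x) = g(x)h(x)$ with $g(x), h(x) \in \Z[x]$, neither of them a unit. Since $f(0) = g(0)h(0)$ and $|f(0)|$ is prime, this prime cannot be written as a product of two integers each exceeding $1$; hence at least one of $|g(0)|, |h(0)|$ equals $1$, say $|g(0)| = 1$.

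Next I would observe that $g$ cannot be constant: a constant factor $c$ with $|g(0)| = |c| = 1$ would be a unit, contrary to assumption. So $\deg g = k \geq 1$, and I may write $g(x) = c\prod_{i=1}^{k}(x - \alpha_i)$ with $c \in \Z \setminus \{0\}$ the leading coefficient and $\alpha_1, \dots, \alpha_k$ the zeros of $g$. Since each $\alpha_i$ is also a zero of $f$, the hypothesis forces $|\alpha_i| > 1$ for every $i$. Evaluating at $0$ then gives
\[
|g(0)| = |c|\prod_{i=1}^{k}|\alpha_i| > |c| \geq 1,
\]
the strict inequality coming from $|\alpha_i| > 1$ together with $k \geq 1$, and $|c| \geq 1$ because $c$ is a nonzero integer. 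This contradicts $|g(0)| = 1$, and therefore $f(x)$ admits no such factorization, i.e. it is irreducible over $\Z$.

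I do not expect any genuine obstacle here, which is precisely why the statement is flagged as an easy variant: the whole argument is the reciprocal counterpart of the preceding lemma, with the evaluation point $m$ (where $|m| \geq 2$) replaced by $0$ and the bound $|\alpha_i| < 1$ replaced by $|\alpha_i| > 1$. The single point meriting a line of care is the degenerate constant-factor case, dispatched at the start of the second paragraph by noting that $|g(0)| = 1$ together with non-invertibility of $g$ forces $g$ to have positive degree, so that the product $\prod_{i=1}^{k}|\alpha_i|$ is genuinely over a nonempty set and hence strictly exceeds $1$.
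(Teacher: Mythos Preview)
Your proof is correct and follows exactly the route the paper indicates: the paper does not spell out a proof of this lemma but simply says ``Arguing as in the above lemma, the following lemma can be easily proved,'' and your argument is precisely that adaptation of Lemma~2.2, evaluating at $0$ and using $|\alpha_i|>1$ in place of $|\alpha_i|<1$. Your explicit handling of the constant-factor case is a welcome clarification that the paper leaves implicit.
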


\section{Proof of the Theorem 1.1.}
 Note that if $f(x)$ satisfies condition $(i)$ of the theorem, then in view of Lemma 2.1, it has all its zeros  inside the unit circle $\mathcal{C} = \left\lbrace z \in \C : |z|<1 \right\rbrace  $. Observe that if $(ii)$ holds and $|\alpha| \geq 1$, then 
 $$|f(\alpha)| \geq |a_n||\alpha|^n - \sum\limits_{j=0}^{n-1}|a_j||\alpha|^j \geq |\alpha|^n \left(|a_n| - \sum\limits_{j=0}^{n-1}|a_j|\right) > 1,$$ so $f(\alpha) \neq 0$. Thus, in case  $f(x)$ satisfies condition $(ii)$ of the theorem then all its roots lie in $\mathcal{C}$. As $a_0 \neq 0$ by hypothesis, it now follows that all roots of the polynomial $x^nf(\frac{1}{x})=g(x)$(say) lie in the set $ \left\lbrace z\in \C : |z|>1 \right\rbrace $.  Therefore when $|a_n| =|g(0)| $ is a prime number then $g(x)$ is irreducible over $\Z$ by virtue of Lemma 2.3 and hence is $f(x)$. If there exists an integer $m$ with $|m|\geq 2$ such that $|f(m)|$ is prime then $f(x)$ is irreducible over $\Z$ in view of Lemma 2.2. It only remains to be shown that if $|m|$ is a $r^{th}$ power of some integer  then $f(x^r)$ is irreducible over $\Z$. Since all the roots of $f(x) $ lie in $\mathcal{C}$, therefore so do the roots of $f(\pm x^r) $. As $|f(m)|$ is a prime number, the desired assertion follows from Lemma 2.2. 

 \medskip
 The following examples are quick applications of Theorem 1.1.
 \medskip\\
 \noindent {\bf Example 3.1.}
Let $2^{2^n}+1$ be a prime number for some $n\in \N$. Then the polynomial $F(x)= (2^{2^n}+1)x^n +a_{n-1}x^{n-1}+\cdots  +a_1 x +a_0$ with $1\leq a_0\leq \cdots \leq a_{k-1}<a_k < a_{k+1}\leq \cdots \leq a_{n-1}\leq 2^{2^n}+1$ for some $1\leq k\leq n-1$ is irreducible by Theorem $\ref{1.1}$. \\
 
 \noindent {\bf Example 3.2.} Let $F(x)= 10x^5 +3x^4 +2x^3+x^2 +1$ be a polynomial. Since $F(2)=389$ is a prime, $F(x)$ is irreducible in view of Theorem \ref{1.1}.\\
 
 \noindent {\bf Example 3.3.} Let $f(x) = 2^{51}x^{10} + 2^{10}x^9 - 2^{11}x^{8} + 2^{15}x^{7} - 2^{16}x^{6} - 1$ be a polynomial. One can check that $f(2) = 2^{61} - 1$ is a prime number (Mersenne prime) and $f(x)$ satisfies condition $(ii)$ of the Theorem 1.1. Therefore $f(x)$ is irreducible in $\Z[x]$ in view of Theorem 1.1.

\vspace{-3mm}

\end{document}